\documentclass[11pt,a4paper, twoside]{amsart}


\usepackage{url}
\usepackage{amssymb}
\usepackage{color}
\usepackage{fancyhdr}
\usepackage{colonequals}
\usepackage{graphicx}
\usepackage{pinlabel}

\usepackage{labelfig}
\usepackage{epsfig}
\usepackage{epstopdf}

\usepackage{pinlabel,cite}
\input{epsf.tex}
\usepackage{latexsym,amsfonts,amssymb,verbatim,mathrsfs,amsthm}
\usepackage{amsmath,amsthm,amssymb,latexsym,graphics,textcomp}
\usepackage{eucal,eufrak}
\usepackage{colonequals}
\usepackage{graphicx}
\usepackage{url}
\input{xy}
\xyoption{all}

\usepackage{enumitem}

\input{epsf.tex}





\renewcommand{\epsilon}{\varepsilon}
\renewcommand{\setminus}{\smallsetminus}

\newtheorem*{namedtheorem}{\theoremname}
\newcommand{\theoremname}{testing}

\newtheorem{theorem}{Theorem}[section]
\newtheorem{proposition}[theorem]{Proposition}
\newtheorem{corollary}[theorem]{Corollary}
\newtheorem{lemma}[theorem]{Lemma}

\newtheorem{fact}[theorem]{Fact}

\theoremstyle{definition}

\theoremstyle{remark}
\newtheorem*{remark}{Remark}





\newcommand{\CA}{\mathcal A}
\newcommand{\CG}{\mathcal C}




\newcommand{\cohom}[3]{H^{{\raise1pt\hbox{$\scriptstyle#1$}}}(#2\>\!,#3)}
\newcommand{\tatecohom}[3]%
  {\widehat H^{{\raise1pt\hbox{$\scriptstyle#1$}}}(#2\>\!,#3)}

\newcommand{\Cohom}[3]%
  {H^{{\raise1pt\hbox{$\scriptstyle#1$}}}\big(#2\>\!,#3\big)}
\newcommand{\Tatecohom}[3]%
  {\widehat H^{{\raise1pt\hbox{$\scriptstyle#1$}}}\big(#2\>\!,#3\big)}

\newcommand{\homol}[3]{H_{{\lower1pt\hbox{$\scriptstyle#1$}}}(#2\>\!,#3)}
\newcommand{\homolog}[2]{H_{{\lower1pt\hbox{$\scriptstyle#1$}}}(#2)}












\begin{document}

\title[]{Arc and curve graphs for infinite-type surfaces}

\author{Javier Aramayona, Ariadna Fossas \& Hugo Parlier}

\date{\today}

\maketitle

\begin{abstract}
We study arc graphs and curve graphs for surfaces of infinite topological type. First, we define an arc graph relative to a finite number of (isolated) punctures and prove that it is a connected, uniformly hyperbolic graph of infinite diameter; this extends a recent result of J. Bavard to a large class of  punctured surfaces. 

Second, we study the subgraph of the curve graph  spanned by those elements which intersect a fixed separating curve on the surface. We show that this graph has infinite diameter and geometric rank 3, and thus is not hyperbolic. 

\end{abstract}

\section{Introduction}

For surfaces of finite topological type, an important number of problems about mapping class groups and Teichm\"uller spaces may be understood in terms of the various complexes constructed from curves and/or arcs. Prominent examples of these are the {\em curve graph} and the {\em arc graph} (see Section \ref{sec:prelim-graphs} for definitions); an important feature of both is that they are  hyperbolic, see \cite{MM1} and \cite{MS} respectively. 

On the other hand, these complexes have received limited attention in the case of surfaces of infinite topological type, mainly due to the fact that mimicking the definitions from the case of finite-type surfaces ends up producing a graph of finite diameter; compare with Section 3. However, J. Bavard \cite{Bavard} has recently proved that a natural subgraph of the arc graph is hyperbolic and has infinite diameter (with respect to its intrinsic metric), in the case when the surface is homeomorphic to $\mathbb{S}^2$ minus the union of the north pole and a Cantor set. The main objective of this paper is to extend Bavard's result to a large class of punctured surfaces of infinite topological type.

\subsection{Arc graphs} Let $\Sigma$ be a connected orientable surface of infinite topological type and with empty boundary. Let $\Pi \subset \Sigma$ be the set of punctures of $\Sigma$, which we will always assume to be non-empty. It will also be useful to regard the elements of $\Pi$ as marked points on $\Sigma$, and we will feel free to switch between the two viewpoints in the sequel. Throughout, we will need to assume that $\Pi$ {\em contains at least one point that is isolated in $\Pi$}, when $\Pi$ is equipped with the subspace topology. 
 
 Given a set $P$ of isolated punctures, we define $\CA(\Sigma, P)$ to be the simplicial graph whose vertices correspond to isotopy classes, relative to endpoints, of arcs on $\Sigma$ with both endpoints in $P$, and where two such arcs are adjacent in $\CA(\Sigma, P)$ if they can be realized disjointly on $\Sigma$; see Section \ref{sec:prelim-graphs} for an expanded definition. The graph  $\CA(\Sigma, P)$  turns into a metric space by deeming each edge to have unit length. We will prove:

\begin{theorem}
Let $\Sigma$ be a connected orientable surface, of infinite topological type and with at least one puncture. For any finite set $P$ of isolated  punctures, the graph $\CA(\Sigma, P)$ is connected, has infinite diameter, and is  $7$-hyperbolic.  
\label{thm:archyp}
\end{theorem}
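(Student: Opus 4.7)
The plan is to follow the strategy used by Bavard in the case of the sphere minus a Cantor set and a puncture, which itself adapts the unicorn path machinery of Hensel--Przytycki--Webb to obtain the sharp constant $7$ for arc graphs of finite-type surfaces.

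Connectivity is proved by the standard surgery argument. Given $\alpha,\beta\in\CA(\Sigma,P)$ realized in minimal position, if they are not disjoint I pick an intersection point $x$ and concatenate a subarc of $\alpha$ from one of its endpoints in $P$ to $x$ with a subarc of $\beta$ from $x$ to one of its endpoints in $P$. After a small perturbation this yields an arc $\gamma\in\CA(\Sigma,P)$ with $i(\gamma,\alpha),i(\gamma,\beta)<i(\alpha,\beta)$, so iteration produces a path to a common neighbor of $\alpha$ and $\beta$.

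For $7$-hyperbolicity, I would define the unicorn paths $P(\alpha,\beta)$ between two arcs in minimal position exactly as in Hensel--Przytycki--Webb, via successive surgeries at intersection points. Each unicorn arc has one endpoint on $\alpha$ and one on $\beta$, both of which lie in $P$, so the entire unicorn path lies in $\CA(\Sigma,P)$. The two key facts to verify are: (i) consecutive unicorn arcs can be realized disjointly, so that a unicorn path is indeed a path in $\CA(\Sigma,P)$, and (ii) unicorn triangles are $1$-slim, i.e.\ any arc disjoint from one side of the triangle is within distance $1$ of the opposite unicorn path. Both proofs are purely local, depending only on the intersection combinatorics inside a compact neighborhood of $\alpha\cup\beta$, and therefore transfer verbatim from the finite-type setting to ours. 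The abstract hyperbolicity criterion of Hensel--Przytycki--Webb then delivers the constant $7$.

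Infinite diameter is the step where the infinite-type setting genuinely intervenes, and is the main obstacle. I would fix a finite-type essential subsurface $S\subset\Sigma$ containing $P$ in its interior, of high enough complexity that $\CA(S,P)$ has infinite diameter (for instance admitting a pseudo-Anosov in $\Mod(S,P)$ fixing $P$ setwise). The inclusion $\CA(S,P)\hookrightarrow\CA(\Sigma,P)$ is $1$-Lipschitz; to conclude, I would construct a coarsely Lipschitz retraction $\pi\colon\CA(\Sigma,P)\to\CA(S,P)$ by subsurface projection: place $\gamma\in\CA(\Sigma,P)$ in minimal position with respect to $\partial S$ and send it to a component of $\gamma\cap S$ having both endpoints in $P$, defaulting to a fixed arc of $S$ if no such component exists. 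An orbit of a pseudo-Anosov of $S$ then furnishes an unbounded sequence in $\CA(S,P)$, hence in $\CA(\Sigma,P)$. The subtlety here is in checking that $\pi$ is well-defined up to bounded ambiguity and coarsely Lipschitz on all of $\CA(\Sigma,P)$; the default-arc convention must be arranged so that adjacent arcs in $\CA(\Sigma,P)$ still have projections at bounded distance.
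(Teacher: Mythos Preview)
Your arguments for connectivity and $7$-hyperbolicity are correct and take a more direct route than the paper. The paper establishes both by first proving that any finite collection of arcs in $\CA(\Sigma,P)$ lies in a connected finite-type subsurface $Y\supset P$, and then invoking the Hensel--Przytycki--Webb result for $\CA(Y,P)$ as a black box: a geodesic triangle in $\CA(\Sigma,P)$ has all its vertices in some $\CA(Y,P)$, and a $7$-center there is automatically a $7$-center in the larger graph. Your approach---the intersection-reducing surgery for connectivity and running the unicorn machinery directly inside $\CA(\Sigma,P)$---is closer to Bavard's original treatment and bypasses the reduction lemma entirely. The paper's route is more modular (it cites rather than reproves HPW), yours is more self-contained; both yield the same constant.

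Your infinite-diameter argument, however, has a genuine gap in the definition of the retraction $\pi$. Every $\gamma\in\CA(\Sigma,P)$ has both endpoints in $P\subset S$, so if $\gamma\not\subset S$ then $\gamma$ crosses $\partial S$ an even, positive number of times, and the two components of $\gamma\cap S$ containing the endpoints of $\gamma$ each have one endpoint in $P$ and the other on $\partial S$; all remaining components have both endpoints on $\partial S$. Thus \emph{no} component of $\gamma\cap S$ has both endpoints in $P$, and your default-arc convention is invoked for every arc that leaves $S$. This kills the coarse Lipschitz property: take $\alpha\subset S$ far from the default arc in $\CA(S,P)$ and $\beta$ disjoint from $\alpha$ but not contained in $S$; then $\pi(\alpha)=\alpha$ while $\pi(\beta)$ is the default arc. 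The paper's remedy is to define $\pi_Y$ differently: for $\gamma\not\subset Y$, take a subarc $c'$ of $\gamma$ running from some $p\in P$ to a boundary component $\delta\subset\partial Y$, and let $\pi_Y(\gamma)$ be the boundary of a regular neighborhood of $c'\cdot\delta\cdot (c')^{-1}$, homotoped to be based at $p$. This always lands in $\CA(Y,P)$, and disjoint arcs project to arcs at distance at most $2$, giving exactly the coarse Lipschitz retraction you need.
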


\begin{remark}
As mentioned above, in the particular case when $\Sigma$ is homeomorphic to $\mathbb{S}^2$ minus the union of the north pole and a Cantor set, Theorem \ref{thm:archyp} is due to J. Bavard  \cite{Bavard}. The definition of the arc graph in this case was previously suggested by D. Calegari in his blog where he suggested to use this graph to study the existence of non-trivial quasimorphisms from the mapping class group of ${\mathbb S}^2 - K$. 
\end{remark}

Theorem \ref{thm:archyp} could be regarded as a natural extension to the case of infinite-type surfaces of a result of Hensel-Przytycki-Webb \cite{HPW}, stated as  Theorem \ref{thm:hpw} below, which asserts that arc graphs of finite-type surfaces are 7-hyperbolic. In fact, our proof relies heavily on this result.

However, it should be pointed out that, in spite of the similarities between the cases of finite- and infinite-type surfaces,  the analogy between the two situations has  limitations: indeed, a recent result of Bavard-Genevois \cite{BG} asserts that, unlike for finite-type surfaces, the mapping class group  of an infinite-type surface is not {\em acylindrically hyperbolic}; see \cite{osin} for a definition. 

\medskip

\noindent{\em A remark on the geometry of the different arc graphs.}
Consider the ``full"  arc graph $\CA(\Sigma)$, whose vertices are {\em arbitrary} arcs with endpoints in $\Pi$, and where adjacency corresponds to disjointness. One easily sees that, as long as $\Pi$ is infinite, the diameter of $\CA(\Sigma)$ is equal to 2. This fact serves as justification for having to consider arc graphs {\em relative} to a finite set $P$ of {\em isolated} punctures (as we will note in Section 3, the fact that the elements of $P$ be isolated is essential to obtaining a graph of infinite diameter).

This said, the geometry of the different arcs graphs depends heavily on the subset of arcs used to define the given graph. On the one hand, the fact that arcs have {\em both} endpoints in $P$ versus having {\em at least one} endpoint in $P$ is unimportant, as the two definitions produce graphs that are quasi-isometric; see Lemma \ref{lem:qigraphs} below. 
However, in sharp contrast J. Bavard \cite{Bavard2} has proved that the subgraph of $\CA(\Sigma, P)$ spanned by those arcs that have {\em exactly one}  endpoint in $P$ is not hyperbolic whenever $S$ has genus $\ge 1$ or $|P| \ge 2$. 

%

\subsection{Curve graphs} Next, we turn our attention to curve graphs  for surfaces of infinite topological type. Recall that, given a connected orientable surface $\Sigma$, the curve graph $\CG(\Sigma)$ is the simplicial graph whose vertices are isotopy classes of essential simple closed curves on $\Sigma$, and where two such curves are adjacent in $\CG(\Sigma)$ if they have disjoint representatives; see Section \ref{sec:prelim-graphs} for an expanded definition. 

As was the case for arc graphs, when $\Sigma$ has infinite topological type the graph $\CG(\Sigma)$ has diameter 2 and thus has limited geometric interest. One natural way of producing a curve graph of infinite diameter is to only consider curves on $\Sigma$ that intersect a fixed ``portion" of $\Sigma$. More concretely, fix a separating curve $\alpha$ on $\Sigma$ and consider the full subgraph $\CG(\Sigma, \alpha)$ of $\CG(\Sigma)$ spanned by those curves that essentially intersect $\alpha$. 
Our second result states that  $\CG(\Sigma, \alpha)$ has infinite diameter but is never hyperbolic: 

\begin{theorem}\label{thm:curve}
Let $\Sigma$ be a connected orientable surface, of complexity $\ge 1$ if $\Sigma$ has finite topological type. If $\alpha\subset \Sigma$ is a separating curve, the graph $\CG(\Sigma, \alpha)$ has geometric rank 3. 
\end{theorem}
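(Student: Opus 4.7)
Since $\alpha$ is separating, write $\Sigma\setminus\alpha = \Sigma_1\sqcup\Sigma_2$. Assuming both $\Sigma_i$ have sufficient complexity (automatic when $\Sigma$ has infinite topological type), pick compact essential subsurfaces $Y_i\subset\Sigma_i$ of complexity at least one, disjoint from an annular neighborhood $A$ of $\alpha$, and let $\phi_i$ be a pseudo-Anosov mapping class supported in $Y_i$. Together with the Dehn twist $T_\alpha$, these three mapping classes have pairwise disjoint supports, so $\phi_1$, $\phi_2$, $T_\alpha$ commute and generate a copy of $\Z^3$. Choose a base curve $\gamma_0\in\CG(\Sigma,\alpha)$ that intersects $\alpha$ as well as $\partial Y_1$ and $\partial Y_2$, so that its subsurface projections to $Y_1$, $Y_2$, and the annular complex of $\alpha$ are all defined and non-trivial.

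The plan is to show that the orbit map $f\colon\Z^3\to\CG(\Sigma,\alpha)$, $(a,b,c)\mapsto\phi_1^{\,a}\phi_2^{\,b}T_\alpha^{\,c}(\gamma_0)$, is a quasi-isometric embedding. The upper bound is routine: each generator moves a curve by a uniformly bounded amount in $\CG(\Sigma)$ and preserves the property of essentially intersecting $\alpha$, so the resulting path stays inside $\CG(\Sigma,\alpha)$ and has length $O(|a|+|b|+|c|)$. For the lower bound, one uses Masur--Minsky subsurface projections to $Y_1$, $Y_2$ and $A$: since $\phi_i$ acts with positive translation length on $\CG(Y_i)$, and $T_\alpha$ acts with positive translation length on the annular complex of $\alpha$, the three projection distances between $f(0)$ and $f(a,b,c)$ grow linearly in $|a|$, $|b|$, $|c|$ respectively. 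A Bounded Geodesic Image argument, applied to paths in $\CG(\Sigma,\alpha)$ whose vertices automatically cross $\alpha$ and therefore admit well-defined projections to $A$, $Y_1$ and $Y_2$, then yields
\[
d_{\CG(\Sigma,\alpha)}\bigl(f(0),f(a,b,c)\bigr)\ \gtrsim\ |a|+|b|+|c|,
\]
so $f$ is a quasi-isometric embedding and the geometric rank is at least $3$.

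To match this with the upper bound, the idea is that $\CG(\Sigma,\alpha)$ is coarsely captured by exactly these three projections to the hyperbolic spaces $\CG(\Sigma_1)$, $\CG(\Sigma_2)$, and the annular complex at $\alpha$, via a Masur--Minsky style distance formula; such a formula would rule out quasi-isometric embeddings of $\Z^4$ and pin the rank to $3$. The main obstacle I foresee is the adaptation of subsurface projection and Bounded Geodesic Image machinery to the restricted graph $\CG(\Sigma,\alpha)$: one must verify that geodesics constrained to curves intersecting $\alpha$ still witness large subsurface projections (rather than skirting them by temporarily leaving the subgraph), and that no fourth coarsely independent direction can be smuggled into $\CG(\Sigma,\alpha)$ from the ambient infinite-type topology.
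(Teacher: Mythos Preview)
Your approach is genuinely different from the paper's, and it carries a real gap in the lower bound. The paper uses no pseudo-Anosovs and no Bounded Geodesic Image. Instead it takes $Y_1,Y_2$ to be the \emph{full} complementary components of an annular neighbourhood $Y_\alpha$ of $\alpha$ (each with the new boundary collapsed to a puncture $p_i$) and builds an explicit quasi-isometric embedding
\[
\psi\colon \CA(Y_1,p_1)\times\CA(Y_2,p_2)\times\CA(Y_\alpha)\longrightarrow\CG(\Sigma,\alpha)
\]
by gluing an arc in $Y_1$, an arc in $Y_2$, and two parallel arcs in $Y_\alpha$ into a simple closed curve meeting $\alpha$ twice. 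The coarse left inverse $\pi$ just cuts a curve along $\partial Y_\alpha$ and records the resulting arcs in each piece. The crucial point is that \emph{every} vertex of $\CG(\Sigma,\alpha)$ crosses $\alpha$ and therefore enters both $Y_1$ and $Y_2$, so $\pi$ is defined everywhere and is $1$-Lipschitz on each factor for the elementary reason that disjoint curves give disjoint arcs. No hyperbolicity machinery or finite-type hypothesis is needed; since each factor is an unbounded hyperbolic graph (the annular factor is isomorphic to $\Z$), rank $\ge 3$ follows.

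Your argument breaks exactly where the paper's works. To have pseudo-Anosovs you choose $Y_i$ to be a \emph{compact} proper subsurface of $\Sigma_i$; but a curve $\beta\in\CG(\Sigma,\alpha)$, while forced to enter $\Sigma_i$, is not forced to meet $Y_i$, so $\pi_{Y_i}(\beta)$ can be empty. Hence the Lipschitz inequality you need (``adjacent vertices of $\CG(\Sigma,\alpha)$ have nearby projections to $Y_i$'') is undefined along a geodesic, and BGI does not rescue you: BGI concerns geodesics in $\CG(\Sigma)$, not geodesics in the subgraph $\CG(\Sigma,\alpha)$, and in any case the Masur--Minsky machinery is unavailable for infinite-type $\Sigma$. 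The repair is precisely the paper's move: project to the full complementary pieces, regarded as relative \emph{arc} graphs, so that projections are always defined and the lower bound becomes a one-line Lipschitz estimate. In the finite-type case your idea can be made to work by taking $Y_i=\Sigma_i$, but for infinite-type $\Sigma$ there is no pseudo-Anosov on $\Sigma_i$ to appeal to, and the arc-graph route is the natural substitute.

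As for the upper bound, the paper's proof, like yours, only establishes rank $\ge 3$: the cutting map $\pi$ is Lipschitz but not a quasi-isometry (a curve hitting $\alpha$ many times is far from $\psi\circ\pi$ of it), so Proposition~\ref{prop:product} does not bound the rank from above. The statement ``rank $3$'' should be read with this in mind; the operative conclusion is non-hyperbolicity, which already follows from rank $\ge 2$.
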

Recall that the {\em geometric rank} of a metric space $X$ is the largest integer $n$ for which there is a quasi-isometric embedding of $\mathbb R^n$ into $X$; see Section \ref{sec:prelim-graphs} for an expanded definition. 



We close the introduction by remarking that examples of non-hyperbolic complexes of infinite diameter were previously constructed by Fossas-Parlier \cite{FP}. Among these complexes is a type of pants graph for infinite type surfaces but, unlike the graphs we consider here, the graphs depend on the geometry of a fixed hyperbolic surface. However, they are very far from being hyperbolic as they have infinite geometric rank.

\medskip 
 
The plan of the paper is as follows. In Section \ref{sec:prelim-metric} we will remind a few notions in metric geometry that will be used. Section \ref{sec:prelim-graphs} contains all the necessary background on arc and curve graphs. Section 4 deals with the proof of Theorem \ref{thm:archyp}. Finally, in Section 5 we will give a proof of Theorem \ref{thm:curve}. 

\medskip

\noindent{\bf Acknowledgements.} The first named author is supported by a Ram\'on y Cajal grant RYC-2013-13008. The second author is supported by ERC grant agreement number 267635 - RIGIDITY. The third author was supported by Swiss National Science Foundation grants numbers PP00P2\textunderscore 128557 and PP00P2\textunderscore 153024. The authors acknowledge support from U.S. National Science Foundation grants DMS 1107452, 1107263, 1107367 ÒRNMS: Geometric structures And Representation varietiesÓ (the GEAR Network). We would like to thank Piotr Przytycki  and Richard Webb for helpful conversations, and in particular for Proposition \ref{cor:hpw}. We would also like to thank Julie Bavard for conversations, and in particular for Proposition 3.5 and the first remark of Section 4. Finally, we are grateful to Ken Bromberg for pointing out a mistake in an earlier version of the paper. 

\section{Preliminaries on metric spaces}
\label{sec:prelim-metric}

We briefly recall the notions of Gromov hyperbolicity, quasi-isometry, and geometric rank. A nice discussion on these topics may be found in \cite{GH}.

\subsection{Hyperbolicity} Let $(X,d)$ be a geodesic metric space, and $\delta \ge 0$. A geodesic triangle $T\subset X$ has a {\em $\delta$-center} if there exists $c\in X$ whose distance to each of the three sides of $T$ is at most $\delta$. We say that $(X,d)$ is {\em $\delta$-hyperbolic} if every geodesic triangle in $X$ has a $\delta$-center; we will simply say that $X$ is hyperbolic if it is $\delta$-hyperbolic for some $\delta$. 

\subsection{Quasi-isometries} Let $(X,d_X)$ and $(Y,d_Y)$ be two metric spaces. We say that a map $f:X\to Y$ is a {\em $(\lambda,C)$-quasi-isometric embedding} if there exist $\lambda \ge 1$ and $C \ge 0$ such that  $$\frac{1}{\lambda}\cdot d_X(x,y) - C \le d_Y(f(x),f(y)) \le \lambda \cdot d_X(x,y) + C$$ for all $x,y\in X$. We will say that the map $f$ above is a {\em quasi-isometry} if, in addition, it is {\em almost surjective}: there exists $\delta>0$ such that every element of $Y$ is at distance at most $\delta$ from an element of $f(X)$. 

We state the following observation as a separate lemma, as it is the way in which we will normally check that a given map is a quasi-isometry: 

\begin{lemma}
Let $(X,d_X)$ and $(Y,d_Y)$ be two metric spaces, and $f:X \to Y$ (resp. $g:Y \to X$) an $L$-Lipschitz (resp. $K$-Lipschitz) map. Suppose that there exist $D_f,D_g\ge 0$ such that $d(x,g\circ f (x)) \le D_f$ 
and   $d(y,f\circ g (y)) \le D_g$ for all $x \in X$ and $y\in Y$. Then $f$ (resp. $g$) is a quasi-isometry, with quasi-isometry constants that depend only on $L$ and $D_f$ (resp, $K$ and $D_g$). 
\label{lem:qitech}
\end{lemma}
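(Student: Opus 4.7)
The plan is to verify the two-sided quasi-isometric embedding inequality directly from the triangle inequality together with the two Lipschitz hypotheses, and then to handle almost surjectivity separately. I will write out the argument for $f$; the statement for $g$ follows by interchanging the roles of $(X, L, D_f)$ with $(Y, K, D_g)$.

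First, the upper bound is immediate: since $f$ is $L$-Lipschitz,
\[
d_Y(f(x), f(x')) \le L \cdot d_X(x, x')
\]
for all $x, x' \in X$.

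Second, for the lower bound I would insert $g \circ f$ between $x$ and $x'$ and use the triangle inequality:
\[
d_X(x, x') \le d_X(x, g(f(x))) + d_X(g(f(x)), g(f(x'))) + d_X(g(f(x')), x').
\]
The first and third terms are at most $D_f$ by hypothesis, and the middle term is at most $K \cdot d_Y(f(x), f(x'))$ because $g$ is $K$-Lipschitz. Rearranging yields
\[
d_Y(f(x), f(x')) \ge \tfrac{1}{K} d_X(x,x') - \tfrac{2 D_f}{K}.
\]
Combined with the upper bound, this gives a $(\lambda, C)$-quasi-isometric embedding with $\lambda = \max\{L, K\}$ and $C = 2D_f / K$, both of which depend only on the stated data.

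Third, for almost surjectivity, I would simply observe that for every $y \in Y$ the point $f(g(y))$ lies in $f(X)$ and satisfies $d_Y(y, f(g(y))) \le D_g$, so one can take $\delta = D_g$. There is no real obstacle in this lemma: it is a standard formal manipulation, and the only thing to watch is that one must apply the Lipschitz bound on $g$ (not $f$) in the middle step of the lower-bound computation, which is why the quasi-isometry constants for $f$ inevitably involve $K$ as well. The analogous argument, with the roles of $f$ and $g$ swapped, shows that $g$ is a quasi-isometry with constants depending on $K$ and $D_g$ (and $L$).
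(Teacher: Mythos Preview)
Your proof is correct and entirely standard. The paper does not actually supply a proof of this lemma: it is stated as a well-known observation (``We state the following observation as a separate lemma\ldots'') and left to the reader, so there is no paper argument to compare against. Your observation at the end is also accurate: the lower bound for $f$ genuinely requires the Lipschitz constant $K$ of $g$, and almost surjectivity uses $D_g$, so the lemma's phrase ``depend only on $L$ and $D_f$'' is a slight imprecision in the statement rather than a flaw in your argument.
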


Another well-known fact that we will heavily use is that {\em hyperbolicity is invariant under quasi-isometries}: if $X$ is $\delta$-hyperbolic and $f:X\to Y$ is a $(\lambda, C)$-quasi-isometry, then $Y$ is $\delta'$ hyperbolic, where $\delta'$ depends (in an explicit way) only on $\delta$, $\lambda$ and $C$. 

\subsection{Geometric rank} As mentioned in the introduction, the {\em geometric rank} of a metric space $X$ is the largest $n\in\mathbb N$ for which there exists a quasi-isometric embedding $\mathbb R^n \to X$. We note that an unbounded hyperbolic space has geometric rank 1; a nice exercise proves that the geometric rank is invariant under quasi-isometries. 

\section{Arc and curve graphs}
\label{sec:prelim-graphs}

In this section we define arc graphs and curve graphs, and state some results that will be used in the sequel. Throughout, $\Sigma$ will be a connected orientable surface, possibly of infinite topological type, and with empty boundary. Recall that, in the case when $\Sigma$ has finite topological type, the {\em complexity} of $\Sigma$ is defined to be the number $3g-3+p$, where $g$ and $p$ are, respectively, the genus and number of punctures of $\Sigma$.

\subsection{Arc graphs}
 Assume that  $\Sigma$ has at least one puncture, and let $\Pi \subset \Sigma$ be the set of punctures of $\Sigma$; as mentioned in the introduction, we will often regard the elements of $\Pi$ as marked points on $\Sigma$.  By an arc on $\Sigma$ we mean a nontrivial isotopy class (rel endpoints) of arcs on $\Sigma$, with endpoints in $\Pi$, and whose interior is disjoint from $\Pi$.

The {\em arc graph} $\CA(\Sigma)$ is the simplicial graph whose vertices correspond to arcs on $\Sigma$, and where two arcs are adjacent in $\CA(\Sigma)$ if they have representatives with disjoint interiors. The arc graph becomes a geodesic metric space by declaring the length of each edge to be 1. Masur-Schleimer proved that $\CA(\Sigma)$ is $\delta$-hyperbolic \cite{MS}; improving on this result,  Hensel-Przytycki-Webb \cite{HPW} recently proved that arc graphs of finite-type surfaces are {\em uniformly} hyperbolic:

\begin{theorem}[Hensel-Przytycki-Webb \cite{HPW}]
Let $\Sigma$ be a surface of finite topological type, with at least one puncture. The arc graph $\CA(\Sigma)$ is 7-hyperbolic. 
\label{thm:hpw}
\end{theorem}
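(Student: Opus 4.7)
The plan is to reproduce the Hensel--Przytycki--Webb argument, which is a direct combinatorial proof based on \emph{unicorn arcs}. Fix two arcs $a,b$ on $\Sigma$ with chosen (oriented) endpoints $\alpha \in a$, $\beta \in b$ in $\Pi$, realized in minimal position. For each intersection point $\pi \in a \cap b$, the arc $a_{\alpha,\pi} \cup b_{\pi,\beta}$ (follow $a$ from $\alpha$ to $\pi$, then $b$ from $\pi$ to $\beta$) is an immersed arc; say that $\pi$ is \emph{admissible} if this concatenation is embedded and essential. Ordering the admissible intersections along $a$ gives a finite sequence of embedded arcs $c_0 = a, c_1, \ldots, c_{n-1}, c_n = b$, which I will call the \emph{unicorn path} $P(a,b)$ relative to the chosen endpoints.

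First I would verify the fundamental combinatorial property of unicorn paths: consecutive arcs $c_i$ and $c_{i+1}$ can be realized disjointly, hence $P(a,b)$ is an actual edge path in $\CA(\Sigma)$ joining $a$ to $b$. This uses the fact that $c_{i+1}$ differs from $c_i$ only by a ``surgery'' at the next admissible intersection; the two natural representatives share an initial sub-arc and otherwise lie in complementary sides. In particular $\CA(\Sigma)$ is connected, and $d_{\CA}(a,b) \leq |P(a,b)| - 1$.

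Next I would prove the key \emph{1-slimness for unicorn triangles}: for any three arcs $a,b,c$ with compatible endpoint choices, every arc on $P(a,b)$ lies at distance $\leq 1$ from $P(a,c) \cup P(c,b)$. The argument is a case analysis on how the intersections of $c$ with a unicorn arc $d \in P(a,b)$ distribute between the two halves of $d$; one finds an admissible intersection producing a unicorn arc in $P(a,c)$ or $P(c,b)$ that is disjoint from $d$. This is the heart of the proof and, I expect, the main obstacle, because one must carefully keep track of orientations of endpoints, of embeddedness after surgery, and of which half of $d$ the competing unicorn arc is built from.

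Finally, I would feed this slimness property into the ``thin triangle'' criterion used in \cite{HPW}: if $X$ is a connected graph such that between any two vertices one can choose a path, and these paths form 1-slim triangles, then $X$ is $k$-hyperbolic for an explicit small $k$. Carrying out the bookkeeping (i.e.\ bounding the distance from an arbitrary geodesic to the chosen unicorn paths by comparing lengths and using the reverse triangle inequality that unicorn paths satisfy) yields the constant $7$. Since the whole argument is intrinsic to $\Sigma$ and does not depend on the genus or number of punctures, the hyperbolicity constant is uniform, as required.
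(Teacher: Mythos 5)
The paper does not prove this statement: it is quoted verbatim from Hensel--Przytycki--Webb \cite{HPW} and used as a black box (together with its relative version, Proposition \ref{cor:hpw}). Your proposal is an outline of the original \cite{HPW} argument itself --- unicorn arcs, the unicorn path $P(a,b)$, $1$-slimness of unicorn triangles, and a thin-triangle criterion --- so it takes the ``same'' route as the cited source rather than a new one, and the outline is structurally faithful. Two caveats. First, the two steps you defer are genuinely the entire content of the theorem: the disjointness of consecutive unicorn arcs and, above all, the case analysis proving $1$-slimness; as written your text names these steps but does not carry them out, so this is a proof sketch rather than a proof. Second, and more substantively, the final step is slightly misstated: a family of paths forming $1$-slim triangles does \emph{not} by itself imply hyperbolicity (one also needs the family to be uniformly close to geodesics). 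In \cite{HPW} the missing ingredient is their Lemma 3.3, asserting that for vertices $a',b'$ on $P(a,b)$ the path $P(a',b')$ is compatible with the corresponding subpath of $P(a,b)$; this is what powers the dyadic subdivision argument showing every vertex of $P(a,b)$ lies within distance $6$ of any geodesic from $a$ to $b$, and it is where the explicit constant $7$ (a $1$-center of the unicorn triangle promoted to a $7$-center of the geodesic triangle) actually comes from. Your phrase about ``the reverse triangle inequality that unicorn paths satisfy'' gestures at this but is not the right statement; if you intend to write the proof out, you should isolate and prove that subpath lemma explicitly.
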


On the other hand, as we mentioned in the introduction the arc graph has little interest from a geometric point of view as long as $\Sigma$ has infinitely many punctures, due to the following immediate observation: 

\begin{fact}
Let $\Sigma$ be a connected orientable surface with an infinite number of punctures. Then $\CA(\Sigma)$ has diameter 2. 
\label{lem:arcdiam2}
\end{fact}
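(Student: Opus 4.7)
The plan is to show that any two arcs $\alpha,\beta\in\CA(\Sigma)$ admit a common neighbour in $\CA(\Sigma)$, which gives $d(\alpha,\beta)\le 2$. After fixing minimal-position representatives, $\alpha\cup\beta$ is a finite graph embedded in $\Sigma$: it has at most four vertices in $\Pi$ (the endpoints) and only finitely many transverse interior intersections. In particular, the complement $\Sigma\setminus(\alpha\cup\beta)$ has only finitely many connected components.

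Since $\Pi$ is infinite while only finitely many of its points meet $\alpha\cup\beta$, the set of punctures lying in $\Sigma\setminus(\alpha\cup\beta)$ is still infinite. By the pigeonhole principle, some connected component $C$ of $\Sigma\setminus(\alpha\cup\beta)$ contains at least two distinct punctures $p,q\in\Pi$. I would then take an embedded arc $\gamma\subset C$ joining $p$ to $q$; since $p\ne q$ and $\Sigma$ is orientable with empty boundary, such a $\gamma$ is automatically essential, and by construction it is disjoint from both $\alpha$ and $\beta$. Thus $\gamma$ witnesses $d_{\CA(\Sigma)}(\alpha,\beta)\le 2$.

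To upgrade this bound to diameter exactly $2$, it remains to exhibit a single pair of arcs at distance $2$. Any two arcs that intersect essentially are neither equal nor disjoint and therefore lie at distance at least $2$; such pairs exist on any surface satisfying the hypotheses, e.g. two arcs with common endpoints lying on opposite sides of a handle or of a nonseparating loop. No real obstacle is expected: the only point that wants care is the bookkeeping guaranteeing that $\Sigma\setminus(\alpha\cup\beta)$ has only finitely many components, which is immediate once minimal-position representatives are chosen so that $\alpha\cup\beta$ becomes a finite embedded graph.
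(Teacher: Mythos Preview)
Your argument is correct and is precisely the sort of justification intended; the paper itself records this as an immediate observation and gives no proof. The only detail you leave implicit is that the arc $\gamma\subset C$ can be chosen with interior disjoint from the remaining punctures in $C$, which is unproblematic since $\Pi$ is closed and totally disconnected in $\Sigma$.
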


%
%
%
%

In order to overcome this obstacle, we will consider arc graphs {\em relative} to a finite set of punctures on $\Sigma$. More concretely,  let $P\subset \Pi$ be a finite set, and consider the subgraph $\CA(\Sigma, P)\subset \CA(\Sigma)$ spanned by arcs on $\Sigma$ with both endpoints in $P$. A minor adaptation of the arguments in \cite{HPW} yields that, for $\Sigma$ of finite topological type,  these relative arc graphs are uniformly hyperbolic also, which will constitute a central ingredient of the proof of Theorem \ref{thm:archyp}. 

\begin{proposition}[Hensel-Przytycki-Webb \cite{HPW}]
Let $\Sigma$ be a surface of finite topological type, and $P$ a non-empty set of punctures of $\Sigma$. Then $\CA(\Sigma,P)$ is 7-hyperbolic. 
\label{cor:hpw}
\end{proposition}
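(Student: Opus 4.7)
The plan is to adapt the Hensel--Przytycki--Webb proof of Theorem \ref{thm:hpw} to the relative setting, observing that the entire argument can be run inside the subgraph $\CA(\Sigma, P) \subset \CA(\Sigma)$.

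Recall that the HPW proof rests on the construction of \emph{unicorn arcs}: given arcs $a, b \in \CA(\Sigma)$ in minimal position sharing an endpoint $p$, and a point $x \in a \cap b$, the unicorn arc $c(a,b;x)$ is obtained by concatenating the sub-arc of $a$ from $p$ to $x$ with the sub-arc of $b$ from $x$ to the non-shared endpoint of $b$; listing these arcs by the position of $x$ along $a$ produces a \emph{unicorn path} from $a$ to $b$. The crucial point for us is that the endpoints of every unicorn arc lie among the endpoints of $a$ and $b$, so whenever $a, b \in \CA(\Sigma, P)$ the entire unicorn path between them lies in $\CA(\Sigma, P)$.

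First I would deduce connectedness of $\CA(\Sigma, P)$. If $a, b$ share an endpoint we are done by unicorns. Otherwise, a standard surgery argument on minimal-position representatives produces an intermediate arc in $\CA(\Sigma, P)$ sharing an endpoint with each of $a$ and $b$; concatenating two unicorn paths then yields a path from $a$ to $b$ inside $\CA(\Sigma, P)$.

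The main step is the $7$-hyperbolicity bound itself. HPW derive this from two local properties of unicorn paths: (a) they are $1$-quasi-geodesics after reparametrization, and (b) given three arcs $a, b, c$ pairwise sharing endpoints, the three unicorn paths between them form a triangle whose sides are $1$-close to one another. Both properties are proved by a local surgery argument on minimal-position representatives, in which every arc that appears has its endpoints among the endpoints of the three input arcs, hence in $P$. Consequently, the HPW argument runs verbatim in $\CA(\Sigma, P)$ and yields the same constant. The one point that requires care is HPW's comparison between unicorn paths and arbitrary geodesics of the ambient graph: a $\CA(\Sigma, P)$-geodesic is in particular a path in $\CA(\Sigma)$, and since the unicorn path itself sits inside $\CA(\Sigma, P)$, the comparison proceeds without ever appealing to an arc whose endpoints lie outside $P$. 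This is where I would focus the careful bookkeeping, but I expect no genuine obstruction.
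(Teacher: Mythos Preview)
Your proposal is correct and is precisely the ``minor adaptation of the arguments in \cite{HPW}'' that the paper invokes without spelling out; the paper does not supply its own proof of this proposition but attributes it directly to Hensel--Przytycki--Webb. One small inaccuracy: HPW's unicorn construction does not require $a$ and $b$ to share an endpoint (one merely chooses an end of each), but your key observation---that every unicorn arc has its endpoints among those of $a$ and $b$, hence in $P$---holds regardless and is exactly what makes the adaptation go through.
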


Thus we see that our Theorem \ref{thm:archyp} is a natural extension of the above result to the context of infinite-type surfaces, provided these satisfy certain topological conditions.

Before we continue, recall from the introduction that choosing arcs to have both endpoints in $P$ as opposed to having {\em at least} one endpoint in $P$ is not important from the point of view of their large-scale geometry. More concretely, let $P$ be a finite set of punctures on $\Sigma$ such that every element is {\em isolated} in $\Pi$, equipped with the subspace topology. Consider the subgraph  $\CA^*(\Sigma, P)$ of $\CA(\Sigma)$ spanned by those arcs with at least one endpoint in $P$. We have: 

\begin{lemma}\label{lem:Astar}
Let $\Sigma$ be a connected orientable surface, of complexity at least 2 if it has finite topological type. The graphs $\CA(\Sigma, P)$ and $\CA^*(\Sigma, P)$ are quasi-isometric, with quasi-isometry constants that do not depend on $\Sigma$ or $P$. In particular, for any finite-type surface $\Sigma$, $\CA^*(\Sigma, P)$ is $\delta$-hyperbolic for a universal constant $\delta$ that does not depend on $\Sigma$ or $P$. 
\label{lem:qigraphs}
\end{lemma}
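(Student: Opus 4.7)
The plan is to verify the hypotheses of Lemma \ref{lem:qitech} by constructing Lipschitz maps in both directions. The inclusion $\iota\colon \CA(\Sigma,P)\hookrightarrow \CA^*(\Sigma,P)$ is trivially $1$-Lipschitz. To go back, I would define $f\colon \CA^*(\Sigma,P)\to \CA(\Sigma,P)$ as follows: if the arc $a$ already lies in $\CA(\Sigma,P)$, set $f(a)=a$; otherwise $a$ has endpoints $p\in P$ and $q\in\Pi\setminus P$, and I set $f(a)$ to be the arc based at $p$ obtained by running a parallel push-off of $a$ on one side from $p$ to near $q$, encircling $q$ with a small arc, and returning along a parallel push-off on the other side. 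Since $q\in\Pi$, the loop $f(a)$ bounds a once-punctured disk containing $q$, so it is non-trivial and defines a valid element of $\CA(\Sigma,P)$. By construction $f(a)$ can be realized disjoint from $a$, so $f\circ\iota$ is the identity and $\iota\circ f$ moves each vertex by distance at most $1$.

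The technical heart of the argument is showing that $f$ is Lipschitz. For disjoint vertices $a,b\in \CA^*(\Sigma,P)$, I would do a case analysis. If both $a,b\in \CA(\Sigma,P)$, then $f(a)=a$ and $f(b)=b$ are disjoint. If exactly one of them, say $b$, has a non-$P$ endpoint $q_b$, then $f(b)$ lies in an arbitrarily small neighborhood of $b\cup\{q_b\}$, and since $a$ is disjoint from $b$ and $a$ does not end at $q_b$, one can arrange $f(a)=a$ and $f(b)$ to be disjoint. If both have non-$P$ endpoints with $q_a\neq q_b$, the same neighborhood argument localizes $f(a)$ near $a\cup\{q_a\}$ and $f(b)$ near $b\cup\{q_b\}$, keeping them disjoint.

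The remaining case, in which $a$ and $b$ share their non-$P$ endpoint $q_a=q_b=q$, is the main obstacle: here $f(a)$ and $f(b)$ both encircle the common puncture $q$, and so cannot be disjointified. To bound $d_{\CA(\Sigma,P)}(f(a),f(b))$ by $2$ in this case, I would exhibit a third arc $c\in \CA(\Sigma,P)$ disjoint from $f(a)\cup f(b)$. Since $f(a)\cup f(b)$ is contained in a regular neighborhood $N$ of $a\cup b$ together with a small disk around $q$, it suffices to find such a $c$ in the complement $\Sigma\setminus N$. The complexity hypothesis on $\Sigma$ (complexity at least $2$ in the finite-type case, otherwise infinite topological type) ensures that $\Sigma\setminus N$ is topologically rich enough to accommodate an arc with both endpoints in $P$; the assumption that $P$ consists of isolated punctures is used here to produce a valid essential arc based at such a puncture in $\Sigma\setminus N$. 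This shows $f$ is $2$-Lipschitz, and Lemma \ref{lem:qitech} then yields the quasi-isometry with constants independent of $\Sigma$ and $P$. The uniform hyperbolicity of $\CA^*(\Sigma,P)$ in the finite-type case follows immediately from Proposition \ref{cor:hpw} and the quasi-isometric invariance of $\delta$-hyperbolicity.
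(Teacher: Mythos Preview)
Your argument is correct and follows essentially the same route as the paper: the inclusion $\iota$ is $1$-Lipschitz, the retraction $f$ (the paper calls it $\phi$) is defined by the boundary-of-a-regular-neighborhood construction, $f$ is shown to be $2$-Lipschitz, and Lemma~\ref{lem:qitech} finishes. Your case analysis for the Lipschitz bound is more explicit than the paper's one-line justification; the only imprecision is the phrase ``find such a $c$ in the complement $\Sigma\setminus N$'', since the endpoints of $c$ must lie in $P$ and $P$ may sit inside $N$---what you really want (and what the paper uses) is an arc based at a point of $P$ whose interior runs out to encircle a puncture or handle in $\Sigma\setminus N$, which the complexity hypothesis guarantees.
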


\begin{proof}
We suppose that $\Sigma$ has infinite topological type, as the finite-type case is easier. 
First, the natural inclusion map $$\iota: \CA(\Sigma, P)\to  \CA^*(\Sigma, P)$$ is 1-Lipschitz.  In the other direction, there is a map $$\phi: \CA^*(\Sigma, P)\to \CA(\Sigma, P)$$ defined as follows: given an arc $a\in \CA^*(\Sigma, P)$, if $a\in \CA(\Sigma, P)$ we define $\phi(a) =a$. Otherwise, suppose $a$ has endpoints $p\in P$ and $q\notin P$. By the classification of infinite-type surfaces, the set of punctures of $\Sigma$ is a subset of a Cantor set (see Proposition 5 of \cite{richards}). In the light of this, there exists a small regular neighbourhood of $a$ whose boundary does not contain any punctures. We define $\phi(a)$ to be the boundary of any such regular neighborhood, homotoped so that it is based at $p$ (see Figure \ref{f:map}); here we are making use of the fact that $p$ is isolated. We note that, while there may be many choices for $\phi(a)$, any two choices of $\phi(a)$ are at distance at most 2 in $\CA(\Sigma, P)$; to see this, observe that at least one connected component of the complement of the union of two such regular neighborhoods contains an element of $\Pi$. 

For the same reason, the images under the map $\phi$ of two disjoint arcs in $\CA^*(\Sigma, P)$ are at distance at most 2 in $\CA(\Sigma, P)$; in other words, $\phi$ is 2-Lipschitz. 

Finally, observe that $a= \phi \circ \iota(a)$ for all $a \in \CA(\Sigma, P)$. Similarly, $$d(b, \iota\circ \phi (b))\le 1$$ for all $b \in \CA^*(\Sigma, P)$ The result now follows from Lemma \ref{lem:qitech}. 
\end{proof}

%
%


\begin{figure}[h]
\leavevmode \SetLabels
\L(.44*1.02) $\phi(a)$\\
\L(.53*.55) $a$\\
\L(.45*.34) $q$\\
\L(.63*.34) $p$\\
\endSetLabels
\begin{center}
\AffixLabels{\centerline{\epsfig{file =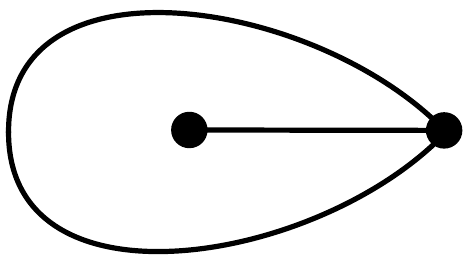,width=4.0cm,angle=0} }}
\vspace{-10pt}
\end{center}
\caption{The image of the arc $a$ under the map $\phi$} \label{f:map}
\end{figure}

\begin{remark}
Note that, a fortiori, the second assertion of Lemma \ref{lem:qigraphs} also holds for infinite-type surfaces, in the light of Theorem \ref{thm:archyp}.
\end{remark}

In sharp contrast, the words ``at least" in the definition of $\CA^*(\Sigma, P)$ are crucial, in the light of the following result due to J. Bavard \cite{Bavard2}. 

\begin{proposition}[\cite{Bavard2}]
Let $\Sigma$ be a surface of infinite topological type, and $P$ a finite set of isolated punctures. Consider the subgraph $\CA_*(\Sigma,P)$  of $\CA(\Sigma, P)$ spanned by those arcs with exactly one endpoint in $P$. If $S$ has genus at least 1, or if $|P| \ge 2$, then $\CA_*(\Sigma,P)$ is not hyperbolic.  
\end{proposition}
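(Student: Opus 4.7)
The strategy is to exhibit a quasi-isometric embedding of $\Z^2$ into $\CA_*(\Sigma,P)$; since an unbounded Gromov-hyperbolic geodesic space has geometric rank one, and since geometric rank is preserved under quasi-isometry (Section~\ref{sec:prelim-metric}), this will suffice to rule out hyperbolicity of $\CA_*(\Sigma,P)$.

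The plan is to produce two disjoint simple closed curves $c_1,c_2\subset\Sigma$ together with a base arc $a_0\in\CA_*(\Sigma,P)$ crossing each $c_i$ essentially, and then to use the commuting Dehn twists $T_i=T_{c_i}$ to generate a $\Z^2$-orbit of $a_0$. If $|P|\ge 2$, I pick distinct punctures $p_1,p_2\in P$ and choose each $c_i$ to bound a subsurface $U_i$ with $p_i\in U_i$, $U_i\cap P=\{p_i\}$, and $U_i$ containing at least one further isolated puncture, with $U_1,U_2$ disjoint; then $a_0$ is routed from $p_1$ to an isolated puncture of $U_2\setminus P$ so that it crosses each $c_i$ exactly once. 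If $|P|=1$ and $\Sigma$ has positive genus, I instead let $c_1$ be a nonseparating curve on an embedded handle disjoint from the unique element $p\in P$, and let $c_2$ be disjoint from $c_1$ and bound a subsurface containing infinitely many isolated punctures but disjoint both from $\{p\}$ and from the handle; I then take $a_0$ to start at $p$, cross the handle (and hence $c_1$), cross $c_2$, and end at an isolated puncture in the subsurface beyond. In either case $T_1$ and $T_2$ fix $P$ pointwise, commute, and act on $\CA_*(\Sigma,P)$, so $f(n,m):=T_1^nT_2^m(a_0)$ defines a map $\Z^2\to\CA_*(\Sigma,P)$.

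The upper bound $d_{\CA_*}(f(n,m),f(n',m'))\le C(|n-n'|+|m-m'|)$ is straightforward: a single application of $T_i$ moves an arc by a uniformly bounded amount in $\CA_*(\Sigma,P)$, since $T_i(a)$ and $a$ can be completed to a short path via an auxiliary arc supported away from $c_i$. For the matching linear lower bound I would employ annular subsurface projections $\pi_{c_i}:\CA_*(\Sigma,P)\to\CA(c_i)\cong\Z$, which are coarsely $1$-Lipschitz because two disjoint arcs can differ in winding number around $c_i$ by at most one. By construction $\pi_{c_i}(f(n,m))-\pi_{c_i}(a_0)$ equals $n$ (respectively $m$) up to uniformly bounded error, yielding the lower bound $|n-n'|/K$ in the first coordinate and $|m-m'|/K$ in the second; because the two estimates concern disjoint subsurfaces, they combine to give a linear lower bound in $|n-n'|+|m-m'|$, as required for a quasi-isometric embedding.

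The principal technical obstacle lies in justifying the annular subsurface projection machinery in the infinite-type setting. In the classical Masur--Minsky theory (see \cite{MM1}) the Lipschitz property of $\pi_{c_i}$ is essentially a local statement near an annular neighbourhood of $c_i$, so one expects the argument to transport directly; nevertheless, it must be verified rather than invoked as a black box, since the ambient graph $\CA_*(\Sigma,P)$ is not known to be hyperbolic (indeed, this is what one is trying to refute). A secondary, much easier, point is confirming that suitable pairs of disjoint curves, together with a base arc crossing both, exist under the stated hypotheses; this is precisely the content of the assumption ``$|P|\ge 2$ or genus $\ge 1$'', which guarantees that two independent loxodromic directions can be separated out within $\Sigma$.
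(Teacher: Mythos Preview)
The paper does not supply a proof of this proposition; it is attributed to an unwritten manuscript of Bavard, so there is no argument in the paper to compare yours against. I can therefore only assess your sketch on its own merits.

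Your upper bound is fine, but the lower bound has a genuine gap. The annular projection $\pi_{c_i}$ is only defined on arcs that essentially intersect $c_i$, and with your choices of $c_1,c_2$ there are plenty of vertices of $\CA_*(\Sigma,P)$ that miss $c_i$. In the $|P|\ge 2$ case, for instance, an arc from $p_1$ to the extra puncture you placed in $U_1$ lies entirely inside $U_1$ and avoids $c_1$; in the genus case, an arc from $p$ that exits directly without touching the handle avoids the nonseparating $c_1$. Your claim that $\pi_{c_i}$ is ``coarsely $1$-Lipschitz'' therefore cannot hold as a statement about all of $\CA_*(\Sigma,P)$: one can have disjoint $a,b\in\CA_*(\Sigma,P)$ with $a\cap c_i=\emptyset$ and $b$ winding arbitrarily many times around $c_i$. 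Consequently a geodesic from $f(n,m)$ to $f(n',m')$ may well pass through arcs on which $\pi_{c_i}$ is undefined, and the change in winding number along it is uncontrolled. You cannot repair this by invoking a Bounded Geodesic Image statement, since those results presuppose hyperbolicity of the ambient graph, which is exactly what you are trying to refute.

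The obstacle you flag (``justifying annular projections in the infinite-type setting'') is not the real issue: the Lipschitz step is local to an annular neighbourhood and transfers verbatim. The real issue is the domain of $\pi_{c_i}$. One natural attempted fix is to pick each $c_i$ so that it separates $P$ from $\Pi\setminus P$; then every arc in $\CA_*(\Sigma,P)$ is forced to cross it. But two disjoint, non-isotopic such curves must cobound a region with no punctures and positive genus, so this only works when $\Sigma$ has genus at least one and fails for the planar $|P|\ge 2$ case covered by the hypothesis. You will need either a different pair of projections (for example, to two disjoint non-annular subsurfaces, each of which every arc of $\CA_*(\Sigma,P)$ is forced to enter) or a separate argument for the planar case.
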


\subsection{Curve graphs} We say that a simple closed curve on $\Sigma$ is {\em essential} if it does not bound a disk with at most one puncture. By a {\em curve} on $\Sigma$ we will mean the free isotopy class of an esential simple closed curve on $\Sigma$. 

The {\em curve graph} $\CG(\Sigma)$ is the simplicial graph whose vertices are curves on $\Sigma$, and where two curves are adjacent in $\CG(\Sigma)$ if they can be realized disjointly on $\Sigma$. As before, the curve graph turns into a geodesic metric space by deeming each edge to have length 1. A celebrated theorem of Masur-Minsky asserts that $\CG(\Sigma)$ is hyperbolic whenever $\Sigma$ has finite type: 

\begin{theorem}[Masur-Minsky\cite{MM1}]
Let $\Sigma$ be a connected orientable surface of finite topological type. If $\CG(\Sigma)$ is connected then it is hyperbolic. 
\end{theorem}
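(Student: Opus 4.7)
The theorem is the celebrated result of Masur--Minsky \cite{MM1}, originally proved via Teichm\"uller geodesics and measured laminations. For this sketch I would adopt the modern combinatorial approach of Hensel--Przytycki--Webb, which has the advantage of producing a \emph{uniform} hyperbolicity constant independent of $\Sigma$.

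The plan is to introduce a distinguished family of paths in $\CG(\Sigma)$ and to show that their triangles are uniformly slim. Given two curves $\alpha,\beta \in \CG(\Sigma)$ realized in minimal position, a \emph{bicorn} between them is any essential simple closed curve obtained as the concatenation of a sub-arc of $\alpha$ with a sub-arc of $\beta$ sharing both endpoints in $\alpha \cap \beta$. The set of bicorns can be linearly ordered (essentially by the intersection points used to cut $\alpha$) so that consecutive bicorns either coincide or can be realized disjointly, yielding a bicorn path $\alpha = c_0, c_1, \ldots, c_n = \beta$ in $\CG(\Sigma)$.

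The key geometric step is to show triangles of bicorn paths are $1$-slim: for any three curves $\alpha,\beta,\gamma$ and their pairwise bicorn paths, every vertex on the $(\alpha,\beta)$-side is disjoint from some vertex on the $(\alpha,\gamma)$- or $(\beta,\gamma)$-side. Once this is in place, Gromov's well-known slim-triangle criterion (in the version for a preferred family of paths) implies that $\CG(\Sigma)$ is $\delta$-hyperbolic for a small absolute constant $\delta$, regardless of the topology of $\Sigma$.

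The main obstacle is the slimness estimate, which demands a careful combinatorial analysis of how sub-arcs from three pairwise minimally-intersecting curves interact simultaneously. The Hensel--Przytycki--Webb argument handles this by inducting on the total number of intersection points in the configuration: an outermost intersection bigon can be removed to reduce to a strictly simpler triangle, and the base case is elementary. The case where $\Sigma$ is closed, having no punctures to anchor arcs, requires either puncturing $\Sigma$ at an auxiliary point and passing back through a quasi-isometry of curve graphs (via Lemma \ref{lem:qitech}), or directly adapting the bicorn construction to ambient simple closed curves; both adaptations are straightforward.
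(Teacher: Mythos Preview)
The paper does not prove this theorem at all: it is quoted as a background result of Masur--Minsky with a bare citation to \cite{MM1}, and the authors make no attempt to reprove it. So there is no ``paper's own proof'' to compare against; your sketch stands on its own as an independent justification.

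That said, your sketch has a couple of genuine soft spots. First, the bicorn technology for curve graphs is not in \cite{HPW}; that paper treats the curve graph by passing through the arc graph (and puncturing in the closed case), while the direct bicorn route you describe is due to later work (Przytycki--Sisto, Rasmussen). More substantively, bicorns between two curves in minimal position do not admit a canonical \emph{linear} order with consecutive ones disjoint; there are typically many bicorns, and extracting a path requires a specific choice (e.g.\ ordering by one of the two pivot points along $\alpha$) together with a lemma that adjacent choices intersect in a controlled way. Your description of the slimness step is also off: since the three curves are pairwise in minimal position there are no bigons to remove, so an ``induction by excising an outermost bigon'' does not get started. The actual mechanism is a direct combinatorial check on how the pivot points of a bicorn on the $(\alpha,\beta)$-side sit relative to $\gamma$, producing a disjoint bicorn on one of the other two sides. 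If you want a self-contained sketch, either follow \cite{HPW} faithfully (unicorns in the arc graph, then the arc-to-curve quasi-isometry), or state the bicorn path construction and the pivot-point lemma precisely.
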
 

As was the case with the arc graph, in the case of surfaces of infinite topological type the curve graph is not that interesting from a geometric viewpoint:

\begin{fact}
Suppose $\Sigma$ is a connected orientable surface of infinite topological type. Then $\CG(\Sigma)$ has diameter 2. 
\label{lem:diam2}
\end{fact}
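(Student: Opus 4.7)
The plan is to show that for any two curves $\alpha, \beta$ on $\Sigma$ there exists a third curve $\gamma$ disjoint from both, which immediately gives $d_{\CG(\Sigma)}(\alpha,\beta)\le 2$.

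Fix representatives of $\alpha$ and $\beta$ in minimal position, and let $N$ be a closed regular neighbourhood of $\alpha\cup\beta$ in $\Sigma$. Since $\alpha\cup\beta$ is a finite graph embedded in $\Sigma$, $N$ is a compact subsurface of $\Sigma$ and in particular has finite topological type. The complement $\Sigma\setminus\mathrm{int}(N)$ is then a (possibly disconnected) surface whose union with $N$ is all of $\Sigma$; since $\Sigma$ has infinite topological type while $N$ does not, at least one connected component $\Sigma_0$ of $\Sigma\setminus\mathrm{int}(N)$ must itself have infinite topological type.

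Next I would produce an essential simple closed curve $\gamma$ inside $\Sigma_0$ which remains essential in $\Sigma$. Because $\Sigma_0$ has infinite topological type, it either has infinite genus or its end space is infinite (or both); in either case $\Sigma_0$ contains a simple closed curve $\gamma$ which is neither null-homotopic nor peripheral in $\Sigma_0$ (for instance, a non-separating curve around a handle in the infinite-genus case, or a separating curve cutting off two non-trivial ends in the other case). Such a $\gamma$ cannot bound a disc or a once-punctured disc in $\Sigma$: any such disc would be disjoint from $N$ and hence contained in $\Sigma_0$, contradicting the choice of $\gamma$. Thus $\gamma$ is essential in $\Sigma$, and by construction it lies in $\Sigma_0\subset\Sigma\setminus(\alpha\cup\beta)$, so it is disjoint from both $\alpha$ and $\beta$.

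The only delicate point is the verification that the auxiliary curve $\gamma$ chosen in $\Sigma_0$ is essential in the ambient surface $\Sigma$, and not merely in $\Sigma_0$; this is handled by the observation above that any compressing or puncture-bounding disc for $\gamma$ must already live inside $\Sigma_0$. To conclude, one still needs to rule out the degenerate case in which $\alpha$ and $\beta$ themselves are the only candidates (i.e.\ $\gamma$ accidentally coincides with one of them), but the infinite topological type of $\Sigma_0$ guarantees infinitely many distinct isotopy classes of essential curves, so a suitable $\gamma$ different from $\alpha$ and $\beta$ exists, completing the proof.
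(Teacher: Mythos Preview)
The paper does not actually prove this fact; it is stated as an immediate observation and left without argument. Your proof sketch supplies the natural argument the paper omits, and the overall strategy---find a finite-type regular neighbourhood $N$ of $\alpha\cup\beta$, locate an infinite-type complementary component $\Sigma_0$, and pick an essential curve $\gamma$ there---is correct and is exactly what one would expect.

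One step deserves a little more care. You assert that ``any such disc would be disjoint from $N$ and hence contained in $\Sigma_0$'', but this is not automatic: a priori a disc $D$ bounded by $\gamma$ could \emph{contain} $N$ rather than avoid it, since $\partial D=\gamma$ is disjoint from $N$ and $N$ is connected (after reducing to the case where $\alpha$ and $\beta$ intersect). To rule this out, note that if $N\subset D$ then $\alpha\subset D$, and every simple closed curve in a disc or once-punctured disc bounds a (possibly once-punctured) disc, contradicting the essentiality of $\alpha$. Alternatively, you can sidestep the issue entirely by choosing $\gamma$ more carefully: if $\Sigma_0$ has positive genus take $\gamma$ non-separating in $\Sigma_0$, hence non-separating in $\Sigma$; if $\Sigma_0$ is planar with infinitely many ends, take $\gamma$ separating $\Sigma_0$ into two pieces each with infinitely many ends, so the side of $\gamma$ not meeting $N$ already has infinite type. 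Either way $\gamma$ is essential in $\Sigma$. Finally, you only argue the upper bound $\operatorname{diam}\le 2$; the lower bound follows trivially since any infinite-type surface contains a pair of curves that must intersect.
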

%

As mentioned in the introduction, one way of producing a ``curve graph" of infinite diameter is to only consider curves on $\Sigma$ that intersect a fixed curve on the surface. More concretely, fix a separating curve $\alpha \subset \Sigma$ and define $\CG(\Sigma, \alpha)$ to be the full subgraph of $\CG(\Sigma)$ spanned by those curves that essentially intersect $\alpha$. However, in Theorem \ref{thm:curve} we will prove that, while $\CG(\Sigma, \alpha)$ has infinite diameter, it is never hyperbolic.


\section{Proof of Theorem \ref{thm:archyp}}

Throughout this section $\Sigma$ will be a connected orientable surface of infinite topological type, with empty boundary, and with at least one puncture.  Let $\Pi$ be the set of punctures of $\Sigma$, and regard its elements as marked points on $\Sigma$. We recall that, by the classification of infinite-type surfaces \cite{richards}, the set $\Pi$ is a subset of a Cantor set; see Proposition 5 of \cite{richards}. We will assume that $\Pi$ contains at least one element that is isolated in $\Pi$ equipped with the subspace topology. Let $P\subset \Pi$ be a nonempty finite set of isolated punctures; we are going to show that $\CA(\Sigma,P)$ is 7-hyperbolic.

\begin{remark} If $P$ contains a point that is not isolated, then $\CA(\Sigma,P)$ has finite diameter. 
Indeed, the fact that arcs are compact implies that, for any two arcs with an endpoint on the same $p \in P$ not isolated, one can find a third arc based at $p$ at distance at most 1 from both. 
\end{remark}
As we mentioned in the previous section, the main ingredient will be the result of Hensel-Przytycki-Webb \cite{HPW} stated as Proposition \ref{cor:hpw} above. In order to be able to make use of their result, we need to define a certain type of {\em  subsurface projection} map for arc graphs.\\


\noindent{\it Subsurface projections:}\\

Let $Y\subset \Sigma$ be a subsurface of finite topological type and complexity at least 2,  with $P\subset Y$. Let $\CA(Y,P)$ be the full subgraph of $\CA(\Sigma, P)$ spanned by those vertices of $\CA(\Sigma, P)$ that are entirely contained in $Y$. Fix, once and for all, an orientation on every boundary component of $Y$. We construct a map $\pi_{Y}$ from $\CA(\Sigma, P)$ to the power set of $\CA(Y, P)$ 
$$\pi_{Y}: \CA(\Sigma, P) \longrightarrow \mathcal{P}(\CA(Y, P))$$
as follows: 

\begin{itemize}[itemsep=2ex,leftmargin=0.5cm]
\item  If $c$ is entirely contained in $Y$, then $\pi_{Y} (c) : = \{c\}$. 

\item Otherwise, consider any subarc $c'$ of $c\subset Y$ that has one endpoint on $p\in P$. The other endpoint of $c'$  necessarily lies on a boundary curve $\gamma \subset \partial Y$ (if not,  $c'$ would be entirely contained in $Y$). We orient $c'$ so that it starts at $p$; recall that $\gamma$ has also been given an orientation. We define $\pi_{Y}(c')$ as the boundary of a regular neighborhood of  $c' \circ \gamma \circ {c'}^{-1}$, homotoped so that it is based at $p$. We set
 $$
\pi_{Y}(c) := \{\pi_{Y}(c') \mid  c' \text{ is a subarc of } c \text{ with one endpoint in } P\}.
$$
\end{itemize}

In particular observe that, for any $c\in \CA(\Sigma, P)$, the projection $\pi_{Y}(c)$ has at most two elements. 
The following lemma is an immediate consequence of the definition and will be key in the sequel.

\begin{lemma}
Let $Y\subset \Sigma$ be a subsurface of finite topological type, of complexity at least 2, and with $P\subset Y$. Consider the map $$\pi_{Y}: \CA(\Sigma, P) \to \mathcal P( \CA(Y, P))$$ just defined. Then:
\begin{enumerate}
\item For any vertex $c\in\CA(\Sigma, P)$, we have $${\rm diam}_{\CA(Y,P)} (\pi_{Y}(c))\le 2$$
\item Let $a,b \in \CA(\Sigma, P)$ be disjoint arcs. Then, for any $c_a \in \pi_{Y}(a)$ and any $c_b\in \pi_{Y}(b)$, we have $$d_{\CA(Y, P)}(c_a,c_b)\le 2$$
\end{enumerate} 
\label{l:project}
\end{lemma}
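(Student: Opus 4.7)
The plan is to prove both items via one geometric observation: the regular neighborhoods defining the projections can be chosen so that their boundaries have disjoint representatives in $Y$, giving distance at most $1$ (\emph{a fortiori}, $\le 2$) in $\CA(Y,P)$.

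For item~(i), if $c \subset Y$ then $\pi_Y(c)=\{c\}$ and the claim is trivial. Otherwise $\pi_Y(c)$ has at most two elements, arising from the two subarcs $c'_1, c'_2 \subset c \cap Y$ with a $P$-endpoint (the two terminal pieces of $c \cap Y$). These have disjoint interiors since $c$ is embedded. Let $\gamma_i \subset \partial Y$ denote the boundary component containing the other endpoint of $c'_i$, and $p_i \in P$ the distinguished endpoint. If $\gamma_1 \neq \gamma_2$, the two boundary components are disjoint, and thin disjoint regular neighborhoods of $c'_i \cup \gamma_i$ yield disjoint representatives of $\pi_Y(c'_1), \pi_Y(c'_2)$. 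If $\gamma_1 = \gamma_2 = \gamma$, the natural regular neighborhoods overlap along a collar of $\gamma$; I would resolve this by taking nested collars of slightly different widths, with inner boundaries $\gamma'_1, \gamma'_2$ strictly disjoint, together with thin disjoint strips along $c'_1, c'_2$. The boundary curves so obtained, homotoped to pass through $p_1, p_2$, are disjoint in their interiors.

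For item~(ii), the argument is parallel. If $a \subset Y$ then $c_a = a$; since $a \cap b = \emptyset$ and $a$ is disjoint from $\partial Y$, it avoids a thin regular neighborhood of any subarc $b' \subset b \cap Y$ together with its terminal boundary component, and hence is disjoint from $c_b$. If neither $a$ nor $b$ lies in $Y$, then $c_a, c_b$ come from disjoint subarcs $a' \subset a$ and $b' \subset b$ in $Y$, and the case analysis of item~(i) applies directly: either the terminal boundary components differ and we use disjoint neighborhoods, or they coincide and we use the nested-collar construction.

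The main obstacle is precisely this shared-boundary case: when the two projections come from subarcs ending on the same component $\gamma \subset \partial Y$, the natural regular neighborhoods necessarily intersect along a collar of $\gamma$. The nested-collar trick handles this, but one also has to verify that the final pinching of each boundary curve through its $P$-basepoint introduces no new intersections. This is arranged by routing the pinching tethers in tiny disjoint disks, which is possible because the points of $P$ are isolated in $\Pi$ and so come equipped with small punctured-disk neighborhoods. With this bookkeeping in place, both items follow immediately.
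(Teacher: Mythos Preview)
Your approach has a genuine gap in the shared-boundary case. The ``nested-collar trick'' does not produce disjoint representatives. Suppose $c'_1$ and $c'_2$ both end on the same component $\gamma\subset\partial Y$, and you build $\pi_Y(c'_1)$ using a thin collar (inner boundary $\gamma'_1$) and $\pi_Y(c'_2)$ using a thicker collar (inner boundary $\gamma'_2$, with $\gamma'_1$ between $\gamma$ and $\gamma'_2$). The arc $\pi_Y(c'_1)$ must travel from $p_1$ down to $\gamma'_1$, and in doing so it crosses the circle $\gamma'_2$ at two points near $c'_1$. But $\pi_Y(c'_2)$ runs along essentially all of $\gamma'_2$ except for a small gap near $c'_2$; since $c'_1$ and $c'_2$ are disjoint, those two crossing points lie on the portion of $\gamma'_2$ traced by $\pi_Y(c'_2)$. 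Hence the two arcs meet twice, exactly as the paper asserts.

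More conceptually, $\pi_Y(c'_1)$ cobounds an annulus with $\gamma$; any other arc based in $P$ that also cobounds an annulus with $\gamma$ and is disjoint from $\pi_Y(c'_1)$ in its interior must lie inside that annulus and is therefore isotopic to $\pi_Y(c'_1)$. So whenever $d_1\ne d_2$ arise from the same boundary component, they have positive geometric intersection number and $d_{\CA(Y,P)}(d_1,d_2)\ge 2$. This is why the paper instead uses the hypothesis that $Y$ has complexity at least~$2$: it guarantees the existence of a third arc in $\CA(Y,P)$ disjoint from both $d_1$ and $d_2$, yielding $d_{\CA(Y,P)}(d_1,d_2)\le 2$. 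Your argument never invokes this hypothesis, which is a sign that something has gone wrong. Part~(ii) inherits the same issue when the relevant subarcs of $a$ and $b$ land on a common boundary component.
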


\begin{proof}
To prove $(i)$, let $c\in\CA(\Sigma, P)$ and consider $d_1\ne d_2\in \pi_{Y}(c)$, with $d_i$ coming from a subarc $c_i'$ of $c_i$, $i=1,2$. Observe that $c_1'$ and $c_2'$ have disjoint interiors and, since $d_1 \ne d_2$,  each $c_i'$ has  one endpoint in $P$ and the other one in a boundary component $\gamma_i \subset \partial Y$. If $\gamma_1 \ne \gamma_2$ then $d_{\CA(Y,P)}(d_1,d_2) =1$. Otherwise $d_1$ and $d_2$ intersect twice and, since $Y$ has complexity at least 2, there exists an arc in $Y$ that is disjoint from both $d_1$ and $d_2$. In particular, $d_{\CA(Y,P)}(d_1,d_2) =2$, as desired. 

Part $(ii)$ follows  from the proof of part $(i)$, since $a$ and $b$ are disjoint. 
\end{proof}

Since $\CA(Y,P) \subset \CA(\Sigma, P)$, we immediately obtain the following corollary of Lemma \ref{l:project}: 

\begin{corollary}
Let $Y\subset \Sigma$ be a subsurface of finite topological type, of complexity at least 2 and with $P\subset Y$. The inclusion map $$\CA(Y,P) \hookrightarrow \CA(\Sigma, P)$$ is a quasi-isometric embedding. More precisely, given $a,b \in \CA(Y,P)$
$$
d_{\CA(\Sigma, P)}(a,b) \le d_{\CA(Y,P)}(a,b) \leq 2 \, d_{\CA(\Sigma,P)}(a,b).
$$
\label{cor:qiembedding}
\end{corollary}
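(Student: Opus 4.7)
The plan is to deduce the corollary directly from Lemma~\ref{l:project} by projecting a geodesic of $\CA(\Sigma,P)$ into $\CA(Y,P)$ vertex by vertex.

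The left-hand inequality $d_{\CA(\Sigma, P)}(a,b) \le d_{\CA(Y,P)}(a,b)$ is essentially free: since $\CA(Y,P)$ is a full subgraph of $\CA(\Sigma,P)$, any edge-path in $\CA(Y,P)$ from $a$ to $b$ is also an edge-path in $\CA(\Sigma,P)$, so the inclusion is $1$-Lipschitz. All the work therefore goes into the right-hand inequality.

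For the upper bound, I would fix a geodesic $a = c_0, c_1, \dots, c_n = b$ in $\CA(\Sigma, P)$, where $n = d_{\CA(\Sigma,P)}(a,b)$, and use the subsurface projection $\pi_Y$ to produce a controlled path in $\CA(Y,P)$. For each $i$, choose an arbitrary element $d_i \in \pi_Y(c_i) \subset \CA(Y,P)$. Since both $a$ and $b$ already lie in $\CA(Y,P)$, the definition of $\pi_Y$ gives $\pi_Y(a) = \{a\}$ and $\pi_Y(b) = \{b\}$, so in particular $d_0 = a$ and $d_n = b$. Consecutive vertices $c_i$ and $c_{i+1}$ of the geodesic are disjoint in $\Sigma$, so Lemma~\ref{l:project}(ii) applies and gives $d_{\CA(Y,P)}(d_i, d_{i+1}) \le 2$ for every $i$.

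Concatenating these hops yields a path $a = d_0, d_1, \dots, d_n = b$ in $\CA(Y,P)$ of length at most $2n$, whence
\[
d_{\CA(Y,P)}(a,b) \;\le\; 2n \;=\; 2\, d_{\CA(\Sigma,P)}(a,b),
\]
as required. There is no real obstacle here: once Lemma~\ref{l:project} is in hand the corollary is essentially a bookkeeping argument, and the only subtlety is remembering that the projection of an arc already contained in $Y$ is a singleton, which is exactly what pins down the endpoints of the projected path.
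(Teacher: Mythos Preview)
Your proof is correct and follows essentially the same approach as the paper: project a geodesic in $\CA(\Sigma,P)$ vertex by vertex via $\pi_Y$, use Lemma~\ref{l:project}(ii) to bound each step by~$2$, and note that the endpoints are fixed since $\pi_Y$ is the identity on arcs already in $Y$. If anything, your write-up is slightly more careful than the paper's, since you explicitly take a geodesic and explicitly verify $d_0=a$, $d_n=b$.
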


\begin{proof}
The left inequality comes from the fact that $\CA(Y,P) \subset \CA(\Sigma, P)$. To see that the right inequality holds, let $a,b \in \CA(Y,P)$ and consider a path $[a,b]$ from $a$ to $b$ in $\CA(\Sigma,P)$ between them. The projected path $\pi_Y([a,b])$ is a path from $\pi_Y(a) =a$ to $\pi_Y(b)=b$ in $\CA(Y,P)$ which, by Lemma \ref{l:project}, has length at most twice that of $[a,b]$. Hence the result follows.
\end{proof}

We need one more result before giving a proof of Theorem \ref{thm:archyp}:

\begin{lemma}
Let $F\subset \CA(\Sigma, P)$ be a finite set of arcs. Then there exists a connected finite-type subsurface $Y\subset \Sigma$, of  complexity at least 2 and with $P\subset Y$, such that every element of $F$ is entirely contained in $Y$. 
\label{lem:passtofinite}
\end{lemma}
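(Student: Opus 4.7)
The strategy is to construct $Y$ as a regular neighborhood in $\Sigma$ of a suitable compact set containing the arcs and the marked points. First, fix a representative for each arc in $F$ and set
\[
K \;:=\; P \cup \bigcup_{a \in F} a,
\]
a compact subset of $\Sigma$; since the interior of each arc is disjoint from $\Pi$, we have $K \cap \Pi = P$.

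The essential observation is that $\Pi \setminus P$ is a closed subset of $\Sigma$. Since $\Pi$ itself is closed in $\Sigma$, any limit in $\Sigma$ of a sequence in $\Pi \setminus P$ lies in $\Pi$; if such a limit belonged to $P$, that point would fail to be isolated in $\Pi$, contradicting our standing assumption on $P$. Hence $\Pi \setminus P$ is closed and disjoint from the compact set $K$, and local compactness of $\Sigma$ yields an open set $U \subset \Sigma$ with $K \subset U$ and $U \cap (\Pi \setminus P) = \emptyset$.

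I next take $N \subset U$ to be a compact regular-neighborhood subsurface of $K$, whose boundary is a finite disjoint union of simple closed curves in $\Sigma \setminus \Pi$. If $N$ is disconnected, I join its components with finitely many embedded arcs in $\Sigma \setminus (\Pi \setminus P)$ and replace $N$ by a regular neighborhood of the resulting graph; this keeps $N$ compact, connected, and satisfying $N \cap \Pi = P$. Thus $N$ is a connected, finite-type subsurface of $\Sigma$ that contains $P$ and every arc in $F$.

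Finally, if $N$ already has complexity at least $2$, set $Y := N$ and we are done. Otherwise, since $\Sigma$ has infinite topological type, I choose a connected finite-type subsurface $T \subset \Sigma \setminus N$ disjoint from $\Pi \setminus P$ and of complexity at least $2$ (for example a one-holed torus), together with an embedded arc $\beta$ in $\Sigma \setminus (\Pi \setminus P)$ joining $\partial N$ to $\partial T$; taking $Y$ to be a regular neighborhood in $\Sigma \setminus (\Pi \setminus P)$ of $N \cup \beta \cup T$ yields a finite-type subsurface with all the required properties. The only genuine subtlety is the closedness of $\Pi \setminus P$, which is precisely the point where the isolation hypothesis on $P$ is used; everything else is standard regular-neighborhood construction.
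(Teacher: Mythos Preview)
Your argument is correct and follows the same core idea as the paper's: build $Y$ as a regular neighborhood of the given arcs together with $P$, after observing that $\Pi\setminus P$ is closed and disjoint from the compact set $K$. You make this topological point explicit (and this is exactly where the isolation hypothesis enters), while the paper simply invokes Richards' result that $\Pi$ sits inside a Cantor set. The only real difference is bookkeeping: the paper first enlarges $F$ to a finite set $F'$ containing an arc between every pair of points of $P$, with enough intersecting arcs to force connectedness, and then enlarges $F'$ further until the regular neighborhood of $\bigcup F'$ has complexity $\ge 2$; a single regular neighborhood then does everything at once. You instead take the neighborhood first and repair connectedness and complexity afterwards. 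Both routes work; the paper's is slightly more economical, yours isolates the role of the hypothesis on $P$ more clearly.

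One small slip in your final step: a one-holed torus has complexity $3\cdot 1-3+1=1$, not $\ge 2$, and more to the point, if $\Sigma$ is planar (for instance $\mathbb{S}^2$ minus the union of a point and a Cantor set, precisely the Bavard example the theorem is meant to cover) there are no embedded tori in $\Sigma\setminus N$ at all. The repair is immediate---in the planar case take $T$ to be a five-holed sphere whose boundary curves each cut off some ends of $\Sigma\setminus N$, or, more in the spirit of the paper, just add a few more arcs to $F$ before taking the neighborhood.
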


\begin{proof}
First, we may enlarge $F$ into a bigger finite set $F'$ with the following two properties: 
\begin{enumerate}
\item Every pair of distinct elements of $P$ is the set of endpoints of an arc in $F'$. 
\item For any two pairs $(p_1,q_1)$ and $(p_2,q_2)$ of distinct elements of $P$, there exist arcs $c_1, c_2\in F'$ such that $c_i$ has endpoints in $(p_i,q_i)$, and $c_1$ and $c_2$ intersect.
\end{enumerate}
Since $\Pi$ is a subset of a Cantor set, $P$ is a finite set of isolated punctures and $F'$ is finite, there exists a regular neighborhood $Y$ of the union of the elements of $F'$ that is a finite-type surface. Moreover, up to a further (finite) enlargement of $F$, we may assume that $Y$ has complexity at least 2. We have $P \subset Y$ by (i); moreover, $Y$ is connected by (ii).  Since $F \subset F'$, it follows that every element of $F$ is entirely contained in $Y$. 
\end{proof}

We are finally ready to prove  Theorem \ref{thm:archyp}:

\begin{proof}[Proof of Theorem \ref{thm:archyp}]
We first prove that $\CA(\Sigma, P)$ is connected. Let $a, b \in \CA(\Sigma, P)$. Let $Y\subset \Sigma$ be the subsurface given by applying Lemma \ref{lem:passtofinite} to the set $F=\{a,b\}$, so that we may view $a,b$ as vertices of $\CA(Y,P)$. Since $\CA(Y,P)$ is connected, there exists a path from $a$ to $b$ in $\CA(Y,P)$ which, since  $\CA(Y,P)\subset \CA(\Sigma,P)$, gives the desired path between $a$ and $b$ in $\CA(\Sigma,P)$. 

To see that $\CA(\Sigma, P)$ has infinite diameter, choose a finite-type surface $Y\subset \Sigma$ of complexity at least 2 and with $P\subset Y$. Since $\CA(Y, P)$ has infinite diameter, and is quasi-isometrically embedded in $\CA(\Sigma, P)$ by Proposition \ref{cor:qiembedding}, it follows that the diameter of $\CA(\Sigma, P)$ is infinite. 

We finally prove that $\CA(\Sigma, P)$ is $\delta$-hyperbolic for a universal constant $\delta$; in fact, as mentioned in the introduction, in this particular case $\CA(\Sigma, P)$ will be 7-hyperbolic. Let $T \subset \CA(\Sigma,P)$ be a geodesic triangle, and let $F$ be the finite subset of $\CA(\Sigma,P)$ whose elements are the vertices of $T$. Let $Y$ be the connected, finite-type subsurface of $\Sigma$ yielded by applying Lemma \ref{lem:passtofinite} to the set $F$; in particular, we may view every vertex of $T$ as an element of $\CA(Y,P)$. Since $\CA(Y,P)$ is 7-hyperbolic, by Proposition \ref{cor:hpw}, it has a 7-center $c$, which is an arc in $\CA(Y,P)\subset \CA(\Sigma,P)$. Now the distances in $ \CA(\Sigma,P)$ from $c$ to the sides of $T$ are also at most $7$. Since $T$ is arbitrary, it follows that $\CA(\Sigma, P)$ is $7$-hyperbolic, as claimed.
\end{proof}

\section{Proof of Theorem \ref{thm:curve}}
\label{sec:curve}

In this section we will give a proof of Theorem \ref{thm:curve}. The strategy will be to prove that the graph $\CG(\Sigma, \alpha)$ contains a  quasi-isometric copy of a product of (three) hyperbolic spaces, namely arc graphs of subsurfaces determined by  $\alpha$. 

Let $\Sigma$ be a connected orientable surface, of complexity $\ge 1$ if it has finite topological type,  and fix a separating curve $\alpha \subset \Sigma$. Let $Y_\alpha$ be a closed regular neighborhood of $\alpha$ and denote by $Y_1, Y_2$ the connected components $\Sigma \setminus Y_\alpha$, so that $\Sigma= Y_1 \cup Y_2 \cup Y_\alpha$. Observe that $Y_i$ has one more puncture than $Y$; denote this new puncture by $p_i$.

Let $\CA(Y_i, p_i)$ be the simplicial graph whose vertices are those isotopy classes of  arcs in $Y_i$ with both endpoints in $p_i$, and edges correspond to pairs of such arcs with disjoint representatives. 

Similarly let $\CA(Y_\alpha)$ be the arc graph of $Y_\alpha$, which may be defined as follows. Fix, once and for all, a point on each boundary component of $Y_\alpha$, which we denote $m^+$ and $m^-$, respectively. The vertices of  $\CA(Y_\alpha)$ are isotopy classes, relative endpoints, of arcs with one endpoint on $m^+$ and the other on $m^-$, and two such arcs span an edge if they have disjoint interiors. We note that $\CA(Y_\alpha)$ is isomorphic (and thus isometric) to $\mathbb Z$ (as its infinite, connected and degree 2 in every vertex). Theorem \ref{thm:curve} will follow once we have proved the following: 

\begin{proposition} The graph $\CG(\Sigma, \alpha)$ contains a quasi-isometrically embedded copy of $\CA(Y_1, p_1)\times \CA(Y_2, p_2) \times \CA(Y_\alpha).$
\label{prop:product}
\end{proposition}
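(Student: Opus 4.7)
The plan is to construct an explicit map
$$\phi \colon \CA(Y_1,p_1) \times \CA(Y_2,p_2) \times \CA(Y_\alpha) \longrightarrow \CG(\Sigma,\alpha)$$
(with the product in its $L^\infty$-metric) together with a coarse inverse $\pi$. Denote by $\delta_i$ the boundary component of $Y_\alpha$ lying in $\partial Y_i$, so that $m^+ \in \delta_1$ and $m^- \in \delta_2$, and fix a second marked point on each $\delta_i$. Given $(a_1,a_2,a_\alpha)$, lift each $a_i$ to an arc $\tilde a_i \subset \overline{Y_i}$ whose two endpoints are the chosen marked points on $\delta_i$, and lift $a_\alpha$ to two disjoint parallel copies $a_\alpha^{(1)}, a_\alpha^{(2)} \subset Y_\alpha$ whose four endpoints match those of $\tilde a_1$ and $\tilde a_2$. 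Define
$$\phi(a_1,a_2,a_\alpha) \;:=\; \tilde a_1 \cup a_\alpha^{(1)} \cup \tilde a_2 \cup a_\alpha^{(2)},$$
which is a simple closed curve crossing $\alpha$ in exactly two points and hence a vertex of $\CG(\Sigma,\alpha)$.

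For the coarse inverse, note that any $c \in \CG(\Sigma,\alpha)$ crosses $\alpha$ and so enters both $Y_1$ and $Y_2$. Define $\pi_i(c) \in \CA(Y_i,p_i)$ by choosing any component of $c \cap Y_i$ and collapsing $\delta_i$ to $p_i$, and $\pi_\alpha(c) \in \CA(Y_\alpha)$ by choosing any component of $c\cap Y_\alpha$ and isotoping its endpoints onto $m^\pm$; different choices are disjoint, hence at distance $\le 1$ in the corresponding arc graph, so $\pi := (\pi_1,\pi_2,\pi_\alpha)$ is well-defined up to bounded ambiguity. Two verifications are then immediate: if $c, c' \in \CG(\Sigma,\alpha)$ are disjoint, so are their intersections with each of $Y_1, Y_2, Y_\alpha$, hence $\pi$ is coarsely $1$-Lipschitz into the product; and by construction the components of $\phi(a_1,a_2,a_\alpha) \cap Y_i$ are the $\tilde a_i$ themselves, so $\pi \circ \phi$ lies at bounded distance from the identity on the domain.

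The main obstacle is showing that $\phi$ itself is coarsely Lipschitz. By varying one coordinate at a time it suffices to prove: if $a_1 \sim a_1'$ in $\CA(Y_1,p_1)$ then $d_{\CG(\Sigma,\alpha)}(\phi(a_1,a_2,a_\alpha),\phi(a_1',a_2,a_\alpha)) \le C$, and analogously for $a_2$ and $a_\alpha$. In the $Y_i$ case the two curves agree outside $Y_i$ and their $Y_i$-parts have boundedly many intersections coming only from the opening of $p_i$ to $\delta_i$; in the $Y_\alpha$ case the two curves differ essentially by a single Dehn twist $T_\alpha$, so $i(\phi,\phi') = i(\phi,\alpha)^2 = 4$. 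The delicate point is that a bounded geometric intersection number yields a bounded path only in $\CG(\Sigma)$, whereas we need one in the subgraph $\CG(\Sigma,\alpha)$: to bridge this, I would exploit the topological richness hypothesis on $\Sigma$ — after removing the two finite-type curves $\phi,\phi'$, some complementary component meets $\alpha$ in a subarc and, by the infinite-type (or complexity-$\ge 1$) assumption, carries a simple closed curve that crosses that subarc. Such a curve is a vertex of $\CG(\Sigma,\alpha)$ disjoint from both $\phi(a_1,a_2,a_\alpha)$ and $\phi(a_1',a_2,a_\alpha)$, giving a path of length at most $2$. Combining the Lipschitz bound on $\phi$ with the one on $\pi$ and the near-identity property of $\pi\circ\phi$, a one-sided variant of Lemma \ref{lem:qitech} then promotes $\phi$ to the desired quasi-isometric embedding.
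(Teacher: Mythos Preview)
Your approach is essentially the paper's: build the gluing map $\phi$ (the paper calls it $\psi$), build the cutting map $\pi=(\pi_1,\pi_2,\pi_\alpha)$, check that $\pi$ is Lipschitz and that $\pi\circ\phi$ is near the identity, and use the one-sided Lipschitz criterion (Lemma \ref{lem:qitech}) to promote $\phi$ to a quasi-isometric embedding. The only substantive divergence is that you isolate, and attempt to justify, the step where bounded intersection number between $\phi(a_1,a_2,a_\alpha)$ and $\phi(a_1',a_2,a_\alpha)$ must be converted into bounded distance \emph{inside} $\CG(\Sigma,\alpha)$ rather than merely in $\CG(\Sigma)$; the paper simply asserts this (``any two of them are within a uniformly bounded distance since they intersect a uniformly bounded number of times''). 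Your common-neighbour argument for this point is the right idea and works cleanly when at least one side of $\alpha$ has infinite type or large complexity, though as written it would need a little more care in the lowest finite-type cases.
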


\begin{proof}
We are going to construct an explicit quasi-isometric embedding \[ \psi: \CA(Y_1, p_1)\times \CA(Y_2, p_2) \times \CA(Y_\alpha)\to \CG(\Sigma, \alpha).\] 
In order to do so it will be convenient to have an   alternate description of the graphs $\CA(Y_i,p_i)$, which we now give. 

Let $\overline{Y}_i$ be the two connected components of $\Sigma \setminus {\rm int}(Y_\alpha)$, noting that $\overline Y_i = {\rm int}(Y_i)$ for $i=1,2$. Let $\alpha_i$ be the boundary curve of $\overline Y_i$ that is isotopic to $\alpha$ in $\Sigma$. Consider the simplicial graph $\CA(\overline Y_i,\alpha_i)$ whose vertices are isotopy classes of arcs on $\overline Y_i$ with both endpoints on $\alpha_i$, where isotopies need not fix $\alpha_i$ pointwise, and where two such arcs are adjacent in  $\CA(\overline Y_i,\alpha_i)$ if they can be realized disjointly. Observe that  the graphs  $\CA(\overline Y_i,\alpha_i)$  and $\CA(Y_i, p_i)$ are naturally isomorphic. 

Armed with this alternate description, we proceed to construct the desired map $\psi$. Let $(a_1,a_2,b)\in \CA(Y_1, p_1)\times \CA(Y_2, p_2) \times \CA(Y_\alpha)$, where we view the arc $a_i$ as an element of $\CA(\overline Y_i, \alpha_i)$ instead. We choose an arc $b'\in\CA(Y_\alpha)$ that is disjoint from (but possibly equal to) $b$, and we glue the arcs $a_1, b,b'$ and $a_2$ into a simple closed curve as shown in Figure \ref{fig:glue}. In this way we obtain 
the desired curve $\psi(a_1,a_2,b) \in \CG(\Sigma, \alpha)$. We stress that while such curve is not unique, any two of them are within a uniformly bounded distance since they intersect a uniformly bounded number of times.  For a totally analogous reason we deduce that there exists $L\ge 1$ such that $\psi$ is $L$-Lipschitz, that is: 
$$
d(\psi (a_1,a_2,b), \psi (a_1',a_2',b')) \le L \cdot d((a_1,a_2,b), (a_1',a_2',b'))
$$
for all triples $(a_1,a_2,b)$ and $(a_1',a_2',b')$. 

\begin{figure}[h]
\leavevmode \SetLabels
\L(.28*.50) $b$\\
\L(.33*.565) $b'$\\
\L(.29*.73) $a_2$\\
\L(.29*.23) $a_1$\\
\L(.37*.66) $\alpha_2$\\
\L(.38*.275) $\alpha_1$\\
\L(.38*.485) $\alpha$\\
\endSetLabels
\begin{center}
\AffixLabels{\centerline{\epsfig{file =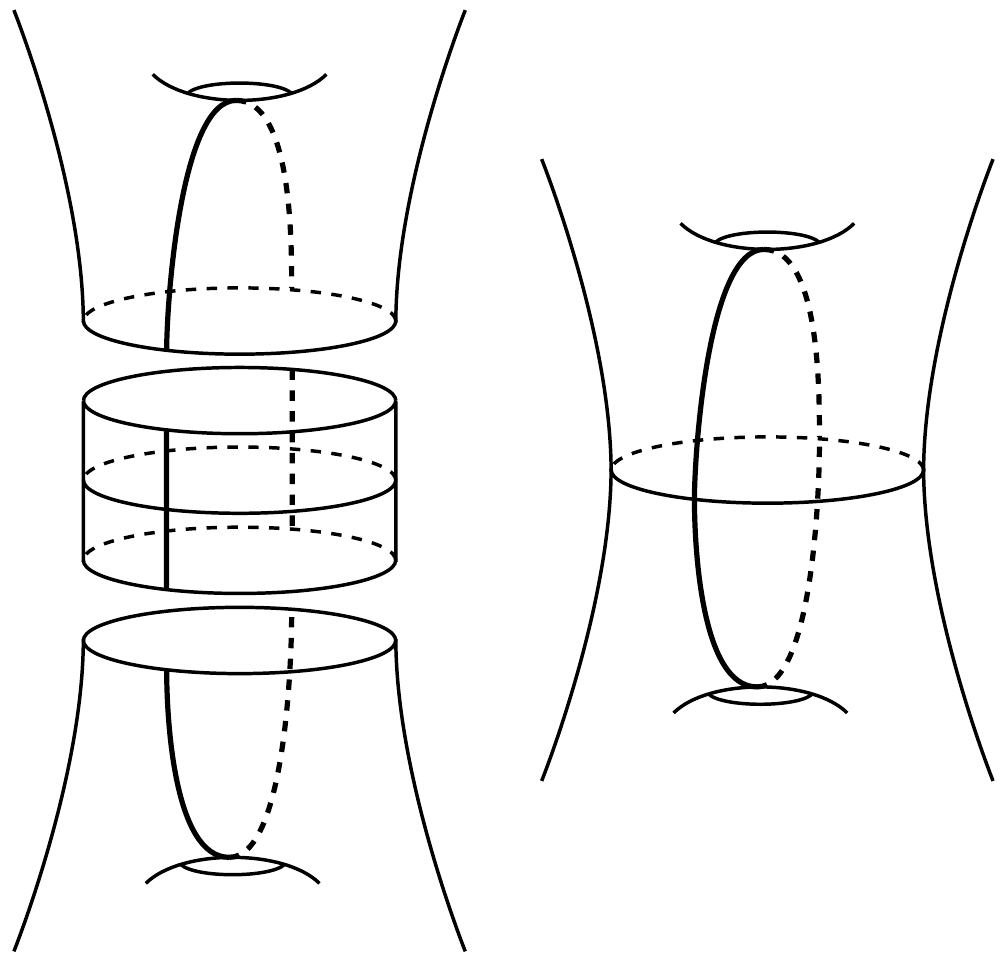, height=8cm, angle=0} }}
\vspace{-10pt}
\end{center}
\caption{Gluing arcs into a simple closed curve.} \label{fig:glue}
\end{figure}

In order to finish the proof that $\psi$ is a quasi-isometric embedding, we first construct a map 
 \[\pi= \CG(\Sigma, \alpha) \to \CA(Y_1, p_1)\times \CA(Y_2, p_2) \times \CA(Y_\alpha)\] as follows. 
 For $i=1,2$, we define a map
\[\pi_i: \CG(\Sigma, \alpha) \to \mathcal P( \CA(Y_i, p_i)) \]
given by $\pi_i(\beta) = \beta \cap Y_i$. Similarly, we define a map
\[\pi_\alpha: \CG(\Sigma, \alpha) \to  \mathcal P(\CA(Y_\alpha)) \] 
into the power set of $\CA(Y_\alpha)$ as follows. Fix, once and for all, an orientation on each of the boundary components of $Y_\alpha$, which we denote $\alpha^+$ and $\alpha^-$ respectively. Now, for any curve $\beta \in \CG(\Sigma, \alpha)$ and any connected component $b$ of $\beta \cap Y_\alpha$, we define $\pi_\alpha(b)$ to be the arc of $\CA(Y_\alpha)$ that starts from $m^+$, follows $\alpha^+$ until meeting $b$, then follows $b$ until meeting $\alpha^-$, and finally follows $\alpha^-$ until meeting $m^-$; recall $m^{\pm}$ are the points used to define $\CA(Y_\alpha)$. Armed with this definition, we set 
$$\pi_\alpha(\beta) = \{\pi_\alpha(b) \mid b \text{ is a connected component of } \beta \cap Y_\alpha \};$$ note that $\pi_\alpha(\beta)$ has diameter at most 1. Finally, we set \[\pi= (\pi_1,\pi_2, \pi_\alpha): \CG(\Sigma, \alpha) \to \CA(Y_1, p_1)\times \CA(Y_2, p_2) \times \CA(Y_\alpha). \]
(Roughly speaking, the map $\pi$ can be seen as a natural way of decomposing every curve $\beta$ essentially intersecting $\alpha$  
by a certain power of the Dehn twist along $\alpha$ and its behaviour away from $\alpha$.)
Since disjoint curves on $\CG(\Sigma, \alpha)$ project to disjoint arcs on $Y_i$ and $Y_\alpha$, respectively, we deduce that $\pi_i(\beta)$ and $\pi_\alpha(\beta)$ are sets of diameter 1 in $\CA(Y_i,p_i)$ and $\CA(Y_\alpha)$, respectively. For the same reason, using an argument analogous to that of Proposition \ref{cor:qiembedding}, we deduce that the maps $\pi_i$ and $\pi_\alpha$ are 1-Lipschitz, and therefore we have: 

\medskip

\noindent{\bf {Fact.}} The map $\pi$ is 3-Lipschitz: for every $\beta,\gamma \in \CG(\Sigma, \alpha)$ we have \begin{eqnarray}d(\pi(\beta),\pi(\gamma)) \le 3\cdot d(\beta,\gamma).\end{eqnarray}
Moreover, it is immediate from the constructions of the maps $\pi$ and $\psi$ that there exists a universal constant $R>0$, not depending on $\Sigma$ or $\alpha$, such that 
\begin{eqnarray}
d((\pi\circ \psi) (a_1,a_2, b), (a_1,a_2,b)) \le R
\end{eqnarray}
for all triples $(a_1,a_2,b)$.   

We are finally in a position to prove that  $\psi$ is a quasi-isometric embedding. As $\psi$ is Lipschitz it suffices to prove that there exist $\lambda \ge 1$ and $C\ge 0$ such that 
\begin{eqnarray*}\frac{1}{\lambda}\cdot d((a,b,c), (a',b',c')) - C \le d(\psi((a,b,c), \psi(a',b',c')),
\label{eq:lower}
\end{eqnarray*} 
for all triples $(a,b,c)$ and $(a',b',c')$ in $\CA(Y_1, p_1)\times \CA(Y_2, p_2) \times \CA(Y_\alpha)$. Using (1) and (2) above, we obtain: 
\begin{eqnarray*}
d((a,b,c), (a',b',c')) &\le& d((\pi \circ \psi) (a,b,c), (\pi \circ \psi) (a',b',c')) + 2R \\ &\le& 3 d(\psi (a,b,c), \psi (a',b',c')) + 2R \\ &\le&  3 d(\psi (a,b,c), \psi (a',b',c')) + 6R
\end{eqnarray*}
and one can take $\lambda =3 $ and $C=2R$. This finishes the proof of Proposition \ref{prop:product}.
\end{proof}

\end{document}